\journal{Applied Mathematics and Computation}
\newtheorem{thm}{Theorem}[section]
\newtheorem{defn}{Definition}
\newtheorem{lem}[thm]{Lemma}
\newtheorem{prop}[thm]{Proposition}
\newtheorem{remark}[thm]{Remark}
\newtheorem{assume}[thm]{Assumption}
\newcommand{\R}{\mathbb{R}}
\newcommand{\obdom}{\Omega\setminus\Omega_h}
\newcommand{\obdomp}{\left(\Omega\setminus\Omega_h\right)}
\newcommand{\ddr}[1]{\frac{\partial #1}{\partial r}}
\newcommand{\ddz}[1]{\frac{\partial #1}{\partial z}}
\newcommand{\ddt}[1]{\frac{d#1}{dt}}
\begin{document}

\begin{frontmatter}


\title{Estimation of Near Surface Wind Speeds in Strongly Rotating Flows}  
\author[label1,label2]{Sean Crowell\corref{cor1}}
\ead[label1]{Sean.Crowell@noaa.gov}
\author[label1]{Luther White}
\ead[label2]{lwhite@ou.edu}
\author[label2]{Louis Wicker}
\ead[label3]{Louis.Wicker@noaa.gov}
\cortext[cor1]{Corresponding Author: scrowell@ou.edu, Voice: (405)325-1089, Fax: (405)325-1180}
\address[label1]{University of Oklahoma, Norman, OK 73019}
\address[label2]{NOAA National Severe Storms Laboratory, Norman, OK 73072}





\begin{abstract}
Modeling studies consistently demonstrate that the most violent winds in tornadic vortices occur in the lowest tens of meters above the surface.  These velocities are unobservable by radar platforms due to line of sight considerations.  In this work, a methodology is developed which utilizes parametric tangential velocity models derived from Doppler radar measurements, together with a tangential momentum and mass continuity constraint, to estimate the radial and vertical velocities in a steady axisymmetric frame.  The main result is that information from observations aloft can be extrapolated into the surface layer of the vortex.  The impact of the amount of information available to the retrieval is demonstrated through some numerical tests with pseudo-data.
\end{abstract}

\begin{keyword}
Vortex Dynamics \sep  Fluid Mechanics


\end{keyword}

\end{frontmatter}


\section{Introduction}

The strongest wind speeds in tornados are believed to occur a few tens of meters above the surface.  Due to line of sight limitations, radar platforms are typically unable to measure this portion of the atmosphere.  The relationship between the measurable flow aloft, and the unobservable (by radar) flow near the surface is complex (see for instance \citep{church1979}, \citep{fiedler86}, \citep{lewellen2000} and \citep{lewellen97} for different flow regimes).

The reviews \citep{lewellen93}, \citep{lewellen_nrc} and \citep{snow82}  discuss the dynamics of different sections of a tornado.  Snow (\citep{snow82}) describes the change in magnitude of the different wind components both in the vertical and radial directions, which is based on simulations in fluid dynamics models and in the Tornado Vortex Chamber \citep{church1977} at Purdue University.  A tornado with a positive vertical velocity along the central axis is called a ``single celled" vortex. The tangential velocity mean field increases as a function of height from ground level to a maximum, and then decreases again to the top of the vortex. Similarly, the tangential velocity increases as a function of the distance from the center of the vortex until it reaches a maximum, and then decreases to zero.  This behavior can be captured with empirical parametric models, such as those discussed in \citep{wood2011}.  Models of this type have also been used in observational studies such as \citep{wurman2005} to better understand measurements in the presence of noisy observations.

In this paper, we estimate the three components of the wind velocity near ground level from observations aloft.  The paper is divided into sections as follows.  In Section~\ref{sec:background}, we review the basic considerations regarding observations of atmospheric circulations by radar instruments and define the problem domain and relevant parameters of interest.  Section~\ref{sec:forwardmodel} introduces a method for estimating the vortex radial and vertical velocities, and Section~\ref{sec:surflayer} discusses the mathematical issues related to this method. The mathematical issues include positive aspects of estimating flow fields with these dynamics, as well as situations in which the dynamics are insufficient to estimate the flow on the entire domain.  Section~\ref{sec:modellimits} examines a few physical limitations of the approach.  In Section~\ref{sec:numexps}, we perform an identical twin experimental test of the method for a tornado-like vortex. We generate pseudo-observations with an assumed tangential velocity model and random errors. Then we estimate the flow using the same tangential velocity model.  This test is not meant to prove conclusively that the method will work with a real data set, but rather to show the theory in action.  

\begin{remark}
\emph{Many researchers in meteorology currently use variational techniques to estimate wind fields from radar velocity measurements.  These techniques are powerful, and are especially useful for dealing with noisy measurements.  They face the problems common to all optimal estimation techniques. Some of the difficulties are finding a unique global minimum and minimizer, and the tendency of least squares techniques to reduce the magnitude of smaller scale features.  Further, a minimizer of a set of weakly enforced constraints may not satisfy any of the constraints particularly well.  Boundary conditions for these types of methods are usually not chosen physically, but rather are allowed to be retrieved with the rest of the variables.  The authors are well acquainted with these techniques, and propose the techniques in this paper as a first step toward remedying some of these difficulties.  Most variational techniques utilize some sort of descent based minimization procedure, and the solutions provided by the method in this paper could be used as the ``first guess" which is required of all iterative schemes.}
\end{remark}

\section{Background}
\label{sec:background}

Assume that two radar instruments measure a given volume of air simultaneously.  The two horizontal components of the velocity can be recovered if the radar beams are approximately horizontal. In this case, the measurements contain very little information about the vertical component of velocity, i.e.~are orthogonal to the vertically pointing basis vector.  Take the flow to be in cylindrical coordinates, with the axis of the coordinate system aligned along the vertical axis of the vortex. Thus the recovered components are the tangential and radial components of the swirling flow.  

For the remainder of the this work, assume two sets of wind measurements, which have been converted to radial and tangential velocities for the vortex of interest, and averaged azimuthally to create an axisymmetric mean pair of velocities.  The spatial domain includes the vertical axis and the surface and measurements which are representable by a parametric model.  A family of parametric models for the tangential velocity is chosen which best approximate the qualitative features of the given data, then a particular parameter set is selected so that the tangential velocity model is optimal (in some sense).  This is done \emph{in advance} of seeking to estimate $u$ and $w$.

In the next section, the estimation of radial and vertical velocities in a layer near the surface, where the velocities are not observable, is considered.  The problem is posed on the domain $\Omega$, which is illustrated in Figure~\ref{fig:domain}.  The domain is decomposed into an observable region $\Omega_o$ and an unobservable region $\Omega_h$, separated by a horizontal line $z = h$. This line is referred to as the \emph{minimum observable height} (MOH) line.  The domain on which we interested in retrieving the flow is referred to as the \emph{surface layer}, which is the portion of the domain between the height $z = 0$ and $z = h_s$, where we will refer to $h_s$ as the \emph{surface layer height}. The parameter $h_s$ is chosen for the application of interest.  For example, if we are interested in surface damage, it might suffice to only examine the flow in the layer with $h_s = 1$ meter, whereas structural engineers might be interested in multistory buildings, and would necessarily use a larger value for this parameter.

\begin{figure}
\centering
\includegraphics[scale=0.5]{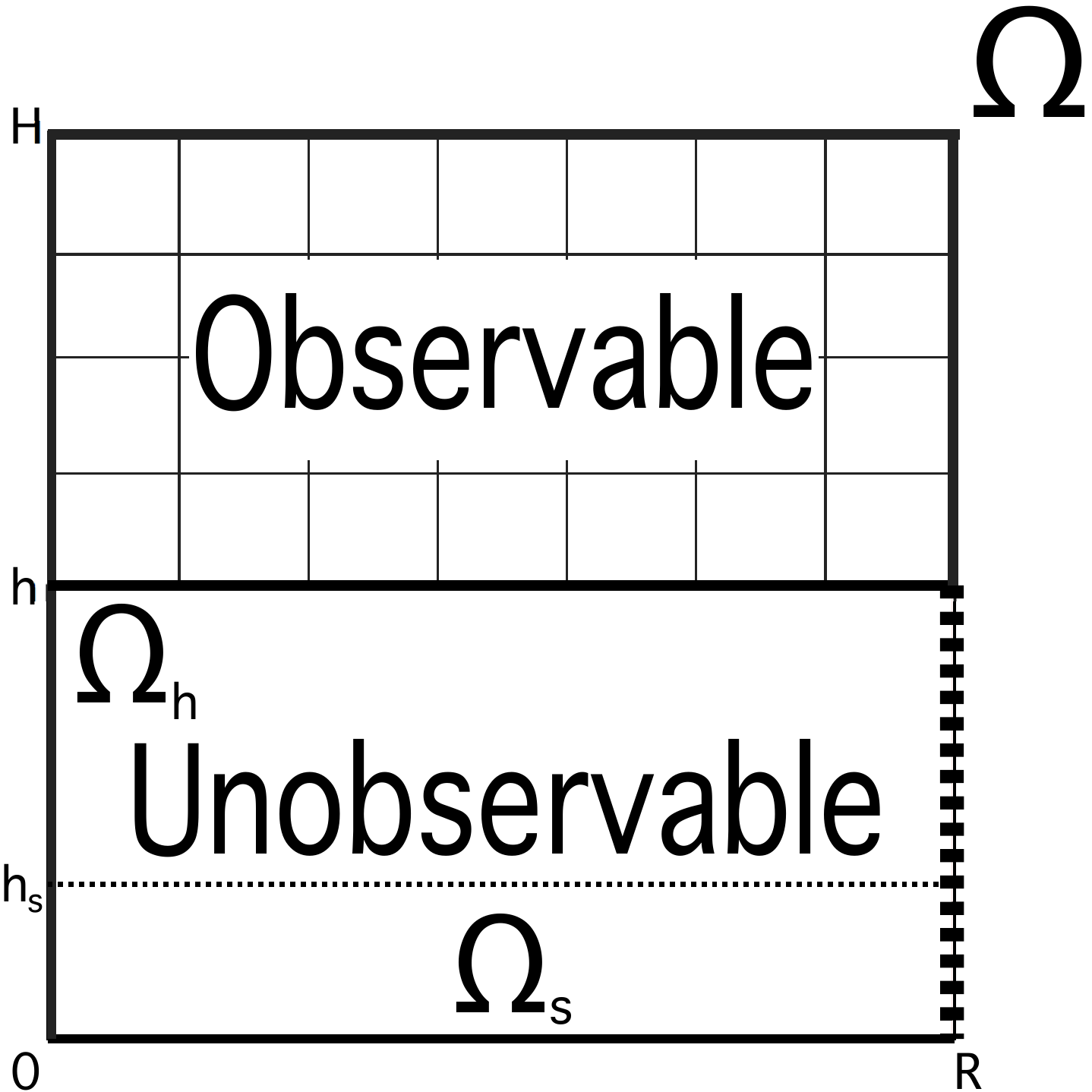}
\caption{Schematic of Problem Domain.  The outer radial boundary is dashed to represent the unknown boundary condition.}
\label{fig:domain}
\end{figure}

\section{Estimating $u$ and $w$}
\label{sec:forwardmodel}

Assume that the vortex is approximately steady and axisymmetric, and that $v(r,z)$ captures the essence of the tangential velocity present in the observations.  Consider the steady, axisymmetric Navier-Stokes equations of motion, given by
\begin{align}
u\ddr{u} + w\ddz{u} - \frac{v^2}{r} &=  -\frac{1}{\rho} \ddr{p} +\nu\left\{\frac{\partial}{\partial r}\left[\frac{1}{r}\ddr{(ru)}\right]+\frac{\partial^2 u}{\partial z^2}\right\} \label{eq:uasstd} \\
	u\ddr{v} + w\ddz{v}+ \frac{uv}{r}  &= \quad\quad\quad\quad\nu\left\{\frac{\partial}{\partial r}\left[\frac{1}{r}\ddr{(rv)}\right]+\frac{\partial^2 v}{\partial z^2}\right\} \label{eq:vasstd} \\
	u\ddr{w}+w\ddz{w}\phantom{+\frac{1}{r}}\quad&= -\frac{1}{\rho}\ddz{p} + \nu\left\{\frac{1}{r}\frac{\partial}{\partial r}\left[r\ddr{w}\right]+\frac{\partial^2 w}{\partial z^2}\right\} \label{eq:wasstd}
\end{align}
where $u$, $v$ and $w$ represent the three components of the velocity vector in cylindrical coordinates, $\rho$ the density, $p$ the pressure, and $\nu$ the fluid viscosity.  Further, assume that the fluid is incompressible, and so mass conservation takes the form
\begin{align}
\frac{1}{r} \ddr{(ru)} + \ddz{w} = 0 \label{eq:cont}
\end{align}
If $v(r,z)$ were a component of a true solution to \eqref{eq:uasstd}-\eqref{eq:wasstd}, then this system would still have a unique solution if $v$ exactly modeled error free data, and if these equations exactly hold for real atmospheric vortices.  Realistically, observational and model errors lead us to conclude that enforcing only a subset of these dynamics may help to avoid an overdetermined problem.  

Introduce the vertical vorticity $\zeta(r,z) = \frac{1}{r}\ddr{(rv)}$ and the radial vorticity $\eta(r,z) = -\ddz{v}$.  With these substitutions, \eqref{eq:vasstd} can be rewritten as
\begin{align}
\zeta(r,z) u(r,z) -\eta(r,z) w(r,z) = \nu \left[ \ddr{\zeta}(r,z)-\ddz{\eta}(r,z)\right] \label{eq:vmom} 
\end{align}
which is an algebraic relation between $u$ and $w$, once $v$ has been selected.  Next, introduce a streamfunction $\Psi$, defined by
\begin{align*}
	\frac{1}{r} \ddz{\Psi}(r,z) &= u(r,z) \\
	-\frac{1}{r} \ddr{\Psi}(r,z) &= w(r,z)
\end{align*}
in cylindrical coordinates, so that $\Psi$ satisfies \eqref{eq:cont} automatically.  The tangential momentum equation \eqref{eq:vmom} becomes
\begin{align}
\zeta(r,z) \ddz{\Psi}(r,z) + \eta(r,z) \ddr{\Psi}(r,z) = \nu r \left[ \ddr{\zeta}(r,z)-\ddz{\eta}(r,z)\right] \label{eq:strfcnpde}
\end{align}
This is a hyperbolic boundary value problem on $\Omega_h$.  The boundary conditions at the surface and vertical axis should yield vortical flows similar to actual atmospheric vortices.  By choosing zero Dirichlet boundary conditions for $\Psi$ on the lower and axial boundaries, mass is conserved.  The boundary condition for $u$ along $z = h$ is provided by the measurements, and the boundary condition for $w$ is taken to be the result of solving \eqref{eq:vmom} for $w$ and substituting in the condition for $u$.  Once $w(r,z)$ is known along the MOH line, $\Psi$ is recovered using
\begin{align}
	\Psi(r,h) &= -\int_0^r s w(s,h) ds.
\end{align}
The outer radial boundary is left unconstrained for the moment.

Equation \eqref{eq:strfcnpde} is quasilinear with associated characteristic equations \cite{evans1998}:
\begin{align}
	\ddt{r} &= \eta(r,z) \label{eq:rchar}\\
	\ddt{z} &= \zeta(r,z) \label{eq:zchar}\\
	\ddt{\Psi} &= \nu r \left[ \ddr{\zeta}(r,z)-\ddz{\eta}(r,z)\right] \label{eq:pchar},
\end{align}
where $t$ is the characteristic variable for the position along the characteristic curve given by $(r(t),z(t))$.  To seek solutions these ordinary differential equations must be supplemented with initial conditions.  Let $s$ denote the characteristic variable which distinguishes between different characteristic curves, by parameterizing the initial values for $r$, $z$, and $\Psi$, and define
\begin{align}
	r(0,s) &= s \label{eq:rinit}\\
	z(0,s) &= h \label{eq:zinit}\\
	\Psi(0,s) &= -\int_0^s s w(s,h) ds \label{eq:pinit}
\end{align}
This choice of initial conditions means that the equations are initialized with values on the upper boundary of $\Omega_h$ and allow the dynamics to propagate the information contained on them down into the domain.

\begin{remark}\emph{Assuming that $v \in C^k(\Omega_h)$ for some $k \geq 2$, classical results from the theory of ordinary differential equations (for example, those in \cite{dieudonne1960}) provide existence and uniqueness of solutions to these initial value problems, and smoothness with respect to the initial conditions.  This implies that if a point $(r,z)$ lies on a characteristic curve that intersects the upper boundary of $\Omega_h$, there is a classical solution $\Psi$ defined at $(r,z)$ that satisfies \eqref{eq:pchar} and \eqref{eq:pinit}.}  \end{remark}

\begin{remark}\emph{The fluid viscosity $\nu$ is an important physical constant for the purposes of time dependent model simulation.  Since the flow is stationary, $\nu$ has a small impact on the results with this method. Where it makes calculations simpler, $\nu$ will be set to 0.}\end{remark}

\noindent In order to simplify the discussion, we introduce the following notation.

\begin{defn} For a point $(r_o,z_o) \in \Omega_h$, define
	\begin{itemize}
\item[(1)] $c(\cdot,r_o,z_o):\R\rightarrow\Omega$: the solution mapping of the dynamical system \eqref{eq:rchar}-\eqref{eq:zchar} with initial condition $(r_o,z_o)$.
\item[(2)] $C(r_o,z_o) = c(\R,r_o,z_o)$: the set of all points $(r,z)$ which can be attained by integrating  \eqref{eq:rchar}-\eqref{eq:zchar} (either forwards or backwards) starting from $(r_o,z_o)$.
\item[(3)] $K_h = \{(r_o,z_o)\in\Omega_h | (r,h) \notin C(r_o,z_o)\phantom{1} \forall r \in [0,R]\}$.
\end{itemize}
\end{defn}
\noindent $C(r,z)$ is referred to as the \emph{characteristic curve containing} $(r,z)$. The set $K_h$ is referred to as the \emph{information void} for the problem, because the dynamics do not carry information from aloft to these points.

\section{Surface Layer Wind Velocities}
\label{section:surflayer}

\noindent Assume that $v(r,z) = \phi(r)\psi(z)$, where   
\begin{assume}\label{ass:singlemax}
	\begin{itemize}
		\item[(1)] $\phi$ and $\psi$ both are $k$ times continuously differentiable $(k \geq 2)$.
		\item[(2)] \emph{ (no-slip condition) } $\phi(0) = \psi(0) = 0$. 
		\item[(3)] $\phi > 0$ on $(0,R)$ and $\psi > 0$ on $(0,H)$.
		\item[(4)] $\frac{d\phi}{dr}(r_o) + \frac{1}{r_o}\phi(r_o) = 0$ and $\frac{1}{r}\frac{d(r\phi)}{dr} \neq 0$ for $r \neq r_o$.
		\item[(5)] $\frac{d\psi}{dz}(z_o) = 0$ and $\frac{d\psi}{dz} \neq 0$ for $z \neq z_o$.
	\end{itemize}
\end{assume}
This assumption allows a more thorough analysis, and \citep{wood2011} has demonstrated the utility of such models for data analysis. A schematic streamfunction of a vortex embodied in these assumptions is shown in Figure~\ref{fig:vsinglemax}.
\begin{figure}
	\centering
	\includegraphics[scale=0.5]{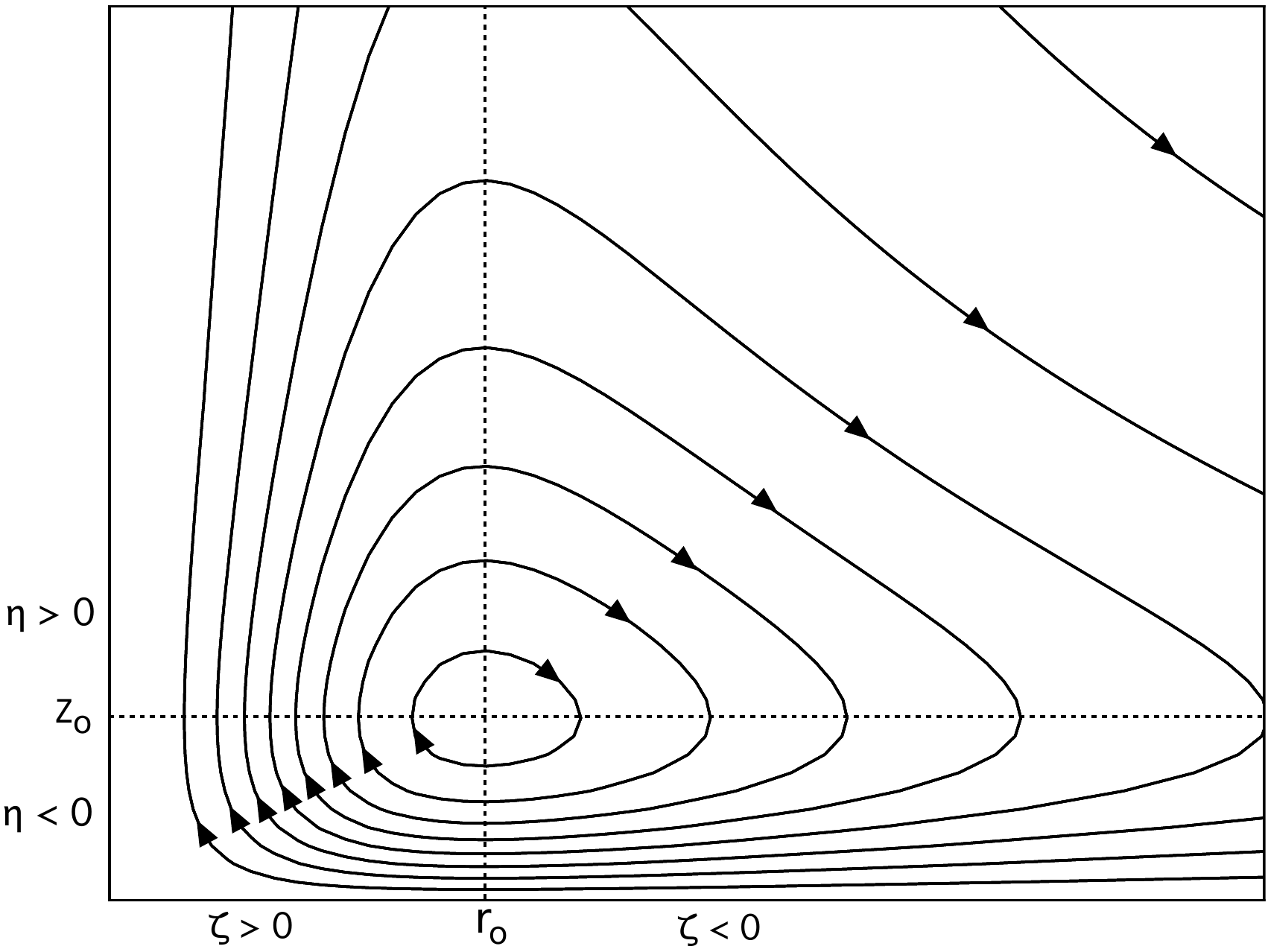}
	\caption{Schematic of Characteristic Curves under Assumption~\ref{ass:singlemax}, where the relative maximum of $\Gamma$ occurs at $(r_o,z_o)$.}
	\label{fig:vsinglemax}
\end{figure}
\\ \\
\noindent The following result says that Assumption~\ref{ass:singlemax} always yields a nontrivial surface layer in which we can retrieve the flow:

\begin{thm} \label{thm:nontrivialregionabovesurface} If Assumption~\ref{ass:singlemax} holds, then there is an $h_o$ such that if $z < h_o$, $C(r,z) \cap \obdomp \neq \emptyset$. \end{thm}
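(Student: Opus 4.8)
The plan is to exploit a conserved quantity for the characteristic system and then reduce the claim to the elementary geometry of its level curves. First I would observe that the planar characteristic field \eqref{eq:rchar}--\eqref{eq:zchar} does not involve $\nu$, so the curves $C(r,z)$ depend only on $\zeta$ and $\eta$. Writing $v = \phi(r)\psi(z)$ gives $\eta = -\phi(r)\psi'(z)$ and $\zeta = \psi(z)g(r)$, where $g(r) = \phi'(r) + \phi(r)/r = \frac1r\ddr{(r\phi)}$. Along a characteristic $\frac{dz}{dr} = \zeta/\eta = -\psi(z)g(r)/(\phi(r)\psi'(z))$, which separates as $\psi'(z)/\psi(z)\,dz = -(\phi'(r)/\phi(r) + 1/r)\,dr$; integrating yields $r\phi(r)\psi(z) = \text{const}$. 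Hence the circulation $\Gamma(r,z) := r\,v(r,z) = r\phi(r)\psi(z)$ is constant along characteristics, and each curve $C(r,z)$ is a connected component of a level set $\{\Gamma = c\}$. This is exactly the family sketched in Figure~\ref{fig:vsinglemax}.

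Next I would read off the shape of the level sets from Assumption~\ref{ass:singlemax}. Setting $F(r) = r\phi(r)$ and $G(z) = \psi(z)$, parts (2)--(3) give $F,G > 0$ on the open domain and $F = G = 0$ on the axis $r=0$ and surface $z=0$, while parts (4)--(5) make $F$ strictly unimodal with interior maximum at $r_o$ and $G$ strictly unimodal with maximum at $z_o$. Thus $\Gamma = F(r)G(z)$ has a unique interior maximum $\Gamma_{\max} = F(r_o)G(z_o)$ at $(r_o,z_o)$, which is a fixed point of \eqref{eq:rchar}--\eqref{eq:zchar}; for $0 < c < \Gamma_{\max}$ the level set is a closed loop encircling $(r_o,z_o)$ that, because $c>0$, stays off the boundaries $r=0$ and $z=0$. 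The structural facts I would record are that $\ddt{z} = 0$ only on $r=r_o$ and $\ddt{r}=0$ only on $z=z_o$, so the $z$-extrema of each loop lie on $r=r_o$. In particular the highest point of $\{\Gamma=c\}$ is $(r_o,z_{\mathrm{top}})$, where $z_{\mathrm{top}} > z_o$ solves $F(r_o)\psi(z_{\mathrm{top}}) = c$.

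I would then convert ``the characteristic meets $\obdomp$'' into an inequality on $c$. The substantive case is $h > z_o$ (if $h\le z_o$, any loop through a point below the center already has $z_{\mathrm{top}} > z_o \ge h$, and one may take $h_o = z_o$). For $h>z_o$, since $\psi$ is strictly decreasing above $z_o$, the loop rises above the MOH line precisely when $z_{\mathrm{top}} > h$, i.e. when $c < c^\ast := F(r_o)\psi(h)$. For a point $(r,z)$ one has $\Gamma(r,z) = F(r)\psi(z) \le F(r_o)\psi(z)$, so $c<c^\ast$ whenever $\psi(z) < \psi(h)$. As $\psi$ is continuous, increasing on $[0,z_o]$, and $\psi(z_o) > \psi(h)$, there is a unique $h_o \in (0,z_o)$ with $\psi(h_o) = \psi(h)$; then $z < h_o$ forces $\psi(z) < \psi(h)$, hence $\Gamma(r,z) < c^\ast$, hence the loop through $(r,z)$ attains height $z_{\mathrm{top}} > h$, so $C(r,z)\cap\obdomp \neq \emptyset$.

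The step I expect to be the main obstacle is making the last sentence rigorous at the level of the \emph{trajectory}, i.e. verifying that the forward orbit of $(r,z)$ genuinely climbs to height $z_{\mathrm{top}}$ inside the finite rectangle rather than escaping through $r=R$ or $z=H$ first. I would handle this by a sign/no-escape argument on \eqref{eq:rchar}--\eqref{eq:zchar}: starting from a near-surface point, forward time either ascends immediately (when $r<r_o$, where $\ddt{z} = \zeta > 0$) or first descends to the loop's lowest point on $r=r_o$ and then ascends the branch $r\le r_o$; on that branch $\ddt{z}>0$ gives monotone ascent while $r$ stays in $(0,r_o)\subset(0,R)$, so the trajectory never reaches $r=R$, and it crosses $z=h$ at a height strictly below $z_{\mathrm{top}}$ and hence below any exit through $z=H$. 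Establishing this monotone-ascent-without-escape claim, together with the unimodality bookkeeping for $F$ and $G$, is where the real work lies; the conserved-circulation identity is what reduces everything else to routine calculus.
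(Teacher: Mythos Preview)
Your argument is correct and reaches the same $h_o$ the paper identifies (see the remark following the theorem: $h_o$ is the smallest solution of $\psi(z)=\psi(h)$), but the route differs from the paper's.  The paper first packages the geometry into a sequence of lemmas: it proves the characteristic curves are the level curves of $\Gamma=rv$ (your conserved quantity), then characterizes the information void $K_h$ as the region enclosed by the single curve $C(r_o,h)$ (allowing for the possibility that this curve is not closed but exits through $r=R$), and finally obtains $h_o$ by compactness of $C(r_o,h)$ as the minimum of the projection $(r,z)\mapsto z$ on that curve.  You instead exploit the separability $\Gamma=F(r)\psi(z)$ directly: you read off $h_o$ as the unique root of $\psi(h_o)=\psi(h)$ on $(0,z_o)$ and then verify by a monotone sign analysis along the \emph{left} branch $r\le r_o$ that the orbit climbs past $z=h$ without escape.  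What each buys: the paper's decomposition yields, as a byproduct, the full description of $K_h$ used later in the paper, and its compactness step is agnostic to whether $C(r_o,h)$ is a genuine loop or meets $r=R$; your approach is more explicit and constructive, giving $h_o$ by a formula rather than an existence argument.  One small imprecision to clean up: your statement that every sub-maximal level set of $\Gamma$ is a closed loop is not quite right on the bounded rectangle, since such a level set can exit through $r=R$ (this is exactly case~(2) of the paper's Lemma~\ref{thm:characterizationofk}); fortunately your trajectory argument never uses the right branch, so restricting attention to the arc with $r\le r_o$, as you do, makes the escape-through-$r=R$ issue disappear and the proof goes through.
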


\noindent The next four results form the basis of the proof of Theorem~\ref{thm:nontrivialregionabovesurface}.

\begin{lem}\label{thm:charcurves} The characteristic curves $C(r,z)$ are the level curves of $\Gamma$, where $\Gamma = rv$ is the circulation on circles parallel to the horizontal plane, centered on the vertical axis.
\end{lem}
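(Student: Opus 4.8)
The plan is to establish the two inclusions separately: first that every characteristic curve lies inside a level set of $\Gamma$, and then that every (regular) connected level set of $\Gamma$ is exactly a characteristic curve. The first inclusion is a direct conserved-quantity computation. Using $\Gamma = rv$ together with the definitions $\zeta = \frac{1}{r}\ddr{(rv)}$ and $\eta = -\ddz{v}$, I would record the gradient identities $\ddr{\Gamma} = r\zeta$ and $\ddz{\Gamma} = r\ddz{v} = -r\eta$. Differentiating $\Gamma$ along a characteristic curve $(r(t),z(t))$ solving \eqref{eq:rchar}--\eqref{eq:zchar} and substituting $\ddt{r}=\eta$, $\ddt{z}=\zeta$ gives
\begin{align*}
	\frac{d}{dt}\Gamma(r(t),z(t)) = \ddr{\Gamma}\,\ddt{r} + \ddz{\Gamma}\,\ddt{z} = (r\zeta)\eta + (-r\eta)\zeta = 0,
\end{align*}
so $\Gamma$ is constant along each characteristic and $C(r,z)\subseteq\{\,\Gamma=\Gamma(r,z)\,\}$.

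For the reverse inclusion I would read the same algebra geometrically: the characteristic velocity field $(\eta,\zeta)$ is everywhere orthogonal to $\nabla\Gamma = (r\zeta,\,-r\eta)$, hence tangent to the level curves of $\Gamma$. At any interior point where $\nabla\Gamma\neq 0$ (so $r>0$ and, since $\abs{\nabla\Gamma}=r\sqrt{\zeta^2+\eta^2}$, also $(\eta,\zeta)\neq 0$), the implicit function theorem makes the level set locally a smooth one-dimensional curve along which the characteristic field is a nonvanishing tangent vector field. Uniqueness of solutions to \eqref{eq:rchar}--\eqref{eq:zchar} (guaranteed by the smoothness of $v$ noted in the remarks) then forces the integral curve through the point to sweep out the whole connected component of that regular level set, giving the equality $C(r,z)=\{\Gamma=\Gamma(r,z)\}$ on the regular part.

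The step I expect to be the real obstacle is controlling the critical set of $\Gamma$. Writing $\Gamma = f(r)\psi(z)$ with $f(r)=r\phi(r)$, I would note that Assumption~\ref{ass:singlemax}(4) is exactly $f'(r_o)=0$ with $f'(r)\neq 0$ for $r\neq r_o$, Assumption~\ref{ass:singlemax}(5) gives $\psi'(z_o)=0$ with $\psi'(z)\neq 0$ otherwise, and positivity of $f$ and $\psi$ on the interior rules out any other vanishing; hence the only interior critical point is $(r_o,z_o)$, where $\eta=\zeta=0$ and the characteristic field has a fixed point. The delicate part is then describing the level-set picture near this isolated maximum: the top level value degenerates to the single point $(r_o,z_o)$, whose characteristic ``curve'' is that fixed point, while every other interior level set is regular and the tangency-plus-uniqueness argument of the previous paragraph applies verbatim. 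I would close by observing that this critical point is precisely the relative maximum of $\Gamma$ drawn in Figure~\ref{fig:vsinglemax}, so the level curves and characteristic curves coincide throughout, as claimed.
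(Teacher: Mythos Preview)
Your proof is correct and rests on the same tangency idea as the paper: the paper simply computes $dz/dr = \zeta/\eta = -\Gamma_r/\Gamma_z$ and concludes in one line that the characteristic curves coincide with the level curves of $\Gamma$. Your version is more thorough---recasting tangency as conservation of $\Gamma$ along characteristics, then invoking the implicit function theorem and ODE uniqueness for the reverse inclusion, and isolating the single interior critical point $(r_o,z_o)$ via Assumption~\ref{ass:singlemax}---but the underlying argument is the same.
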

\begin{proof}
When $\eta \neq 0$, we can write the solution curves as $(r,z(r))$ by considering
\begin{align}
\frac{dz}{dr} = \frac{\zeta}{\eta} \label{symeqnz}
\end{align}
and when $\zeta \neq 0$ as $(r(z),z)$ from
\begin{align}
\frac{dr}{dz} = \frac{\eta}{\zeta}. \label{symeqnr}
\end{align}
Note that
\begin{align*}
\frac{\zeta}{\eta} = \frac{(rv)_r}{-rv_z} = -\frac{\Gamma_r}{\Gamma_z}
\end{align*}
which implies that the characteristic curves are everywhere tangent to the level curves of $\Gamma$.  Hence, viewed in the plane, these two collections of curves are the same.
\end{proof}
\begin{remark}\emph{This result specifies the characteristic curves in terms of our tangential velocity model, which is estimated \emph{a priori} utilizing a least squares (or some other) data mismatch criterion.  It also gives a criteria by which to avoid the solution $\Psi$ being multiply defined, which can occur when using method of characteristics.  To avoid this behavior, choose $v$ to be appropriately smooth.}\end{remark}

\noindent The next result states that when the maximum tangential velocity is in the observable region, then the flow is retrievable over all of $\Omega$ using characteristics.
\begin{lem} \label{thm:zcgreaterthanh} If $h < z_o$, then $K_h = \emptyset$.
\end{lem}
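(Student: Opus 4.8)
The plan is to use Lemma~\ref{thm:charcurves} to replace the characteristic curves by the level curves of $\Gamma = rv = g(r)\psi(z)$, where I write $g(r) := r\phi(r)$, and then to show that through every point of $\Omega_h$ the level curve passing through it climbs all the way up to the line $z=h$. First I would record the monotonicity consequences of Assumption~\ref{ass:singlemax}. Parts (2), (3) and (5) force $\psi$ to be strictly increasing on $[0,z_o]$ (it starts at $0$, is positive, and has no interior critical point below $z_o$), and parts (2), (3) and (4) force $g$ to increase strictly on $[0,r_o]$ and decrease strictly on $[r_o,R]$, since $g(0)=0$, $g>0$ on $(0,R)$, and $g'$ vanishes only at $r_o$. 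In particular $\Gamma_r=g'\psi$ and $\Gamma_z=g\psi'$ vanish simultaneously in the interior only at $(r_o,z_o)$, so this is the unique interior critical point of $\Gamma$, and hence the unique interior equilibrium of the characteristic field $(\eta,\zeta)$.

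Fix an arbitrary $(\bar r,\bar z)\in\Omega_h$ and set $c := \Gamma(\bar r,\bar z)$. The axis and the surface, on which $\Gamma\equiv 0$, are themselves characteristics; the axis $r=0$ already reaches $z=h$, so I may assume $(\bar r,\bar z)$ is an interior point with $c>0$. The key step, where the hypothesis $h<z_o$ enters, is to represent the relevant level set as a single graph. Since $h<z_o$, on the strip $\{0\le z\le z_o\}$ the map $z\mapsto\psi(z)$ is a strictly increasing bijection onto $[0,\psi(z_o)]$, so the portion of $\{\Gamma=c\}$ lying in this strip is exactly the graph $z=F_c(r):=\psi^{-1}\!\left(c/g(r)\right)$, defined over the set $I_c := \{r\in[0,R] : g(r)\ge c/\psi(z_o)\}$. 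By unimodality of $g$ this set is a closed interval $[r_-,r_+]$ containing $r_o$, with $F_c(r_\pm)=z_o$; and since $\psi(\bar z)<\psi(z_o)$ gives $g(\bar r)=c/\psi(\bar z)>c/\psi(z_o)$, the value $\bar r$ lies in the interior of $I_c$, so $(\bar r,\bar z)$ lies on this graph. Because $z\mapsto\psi(z)$ is injective on the strip, the graph is connected and, as its only candidate equilibrium $(r_o,z_o)$ lies strictly above it, it contains no equilibrium of $(\eta,\zeta)$; hence by Lemma~\ref{thm:charcurves} this arc is a subset of the single characteristic $C(\bar r,\bar z)$.

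Finally I would close the argument with the intermediate value theorem along this arc. The function $F_c$ is continuous on $[r_-,r_+]$ with $F_c(\bar r)=\bar z<h<z_o=F_c(r_-)$, so there is an $r_*$ strictly between $r_-$ and $\bar r$ with $F_c(r_*)=h$. Then $(r_*,h)$ lies on the arc, hence on $C(\bar r,\bar z)$, with $r_*\in[0,R]$; this shows $(\bar r,\bar z)\notin K_h$, and since $(\bar r,\bar z)$ was arbitrary, $K_h=\emptyset$.

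I expect the main obstacle to be the connectivity issue hidden in the second paragraph: a priori a level set of $\Gamma$ could split into several components, and a point on $\{\Gamma=c\}\cap\{z=h\}$ need not lie on the same component as $(\bar r,\bar z)$, which is precisely what $C(\bar r,\bar z)$ selects. The entire force of the hypothesis $h<z_o$ is to rule this out, by making $\psi$ injective below $z_o$ so that the relevant piece of the level set is a single graph over an interval; once that is in hand the crossing of $z=h$ is routine. I would also take care with the degenerate set $\{\Gamma=0\}$ (axis and surface) and with the endpoints $(r_\pm,z_o)$, verifying that these are ordinary, non-equilibrium points of $(\eta,\zeta)$ so that the arc really is one connected characteristic.
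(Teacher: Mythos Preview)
Your argument is correct and takes a genuinely different route from the paper's. The paper argues directly with the characteristic ODE: for a point $(\bar r,\bar z)\in\Omega_h$ with $\bar r\le r_o$ it reads off the signs $\eta<0$, $\zeta\ge 0$ (valid because $\bar z<h<z_o$ forces $\psi'>0$) and concludes that the forward trajectory, moving left and up and unable to meet the axis $r=0$, must cross $z=h$; for $\bar r>r_o$ it first drives the trajectory leftward (using $\eta<0$, $\zeta<0$ and the fact that the surface $z=0$ cannot be crossed) until it hits the line $r=r_o$, and then invokes the first case. You instead exploit the separable structure $\Gamma=g(r)\psi(z)$ to write the relevant arc of the level set explicitly as a graph $z=F_c(r)$ over an interval and reach $z=h$ by the intermediate value theorem.

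What each buys: the paper's sign-chasing is shorter and uses only the qualitative vorticity structure, so it would survive mild departures from the product ansatz; on the other hand it leaves implicit why the trajectory cannot stall before reaching $z=h$. Your graph representation makes the connectivity of the relevant piece of $\{\Gamma=c\}$ completely explicit, isolates precisely where the hypothesis $h<z_o$ is used (injectivity of $\psi$ on $[0,z_o]$), and sidesteps any finite-time-arrival issues, at the cost of leaning on the separable form. One small slip: the claim $F_c(r_+)=z_o$ can fail when $g(R)\ge c/\psi(z_o)$, since then $r_+=R$; but your IVT step uses only $F_c(r_-)=z_o$, and $r_->0$ always holds because $g(0)=0$, so the argument is unaffected.
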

\begin{proof}
There are two cases.  For $r \leq r_o$, $\eta<0$ and $\zeta \geq 0$.  Hence if $C(r,z)$ is traversed in the positive $t$ direction, the curve must eventually cross $z = h$, since $C(r,z)$ cannot intersect the vertical axis.  For $r > r_o$, $\eta <0$ and $\zeta < 0$.  Since $C(r,z)$ cannot intersect the horizontal axis, there must be a $z_1$ such that $(r_o,z_1) \in C(r,z)$, and now apply the argument from the first case, using $(r_o,z_1)$ as our initial point.  Hence, for any $(r,z) \in \Omega_h$, $C(r,z) \cap \obdomp \neq \emptyset$, which implies $K_h = \emptyset$.
\end{proof}

\begin{lem}\label{thm:singlemaxclosedcurves}
Suppose that $0 < z_1 < H$ satisfies $C(r_o, z_1) \subset \Omega^o$.  Then $C(r_o,z_1)$ is a closed curve.
\end{lem}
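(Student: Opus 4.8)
The plan is to combine Lemma~\ref{thm:charcurves} with a Morse-theoretic description of the circulation $\Gamma=rv$. By Lemma~\ref{thm:charcurves}, $C(r_o,z_1)$ is the level curve of $\Gamma$ through $(r_o,z_1)$, and since $v=\phi(r)\psi(z)$ we may write $\Gamma(r,z)=g(r)\psi(z)$ with $g(r):=r\phi(r)$. First I would show that $\Gamma$ has a \emph{single} interior critical point, a strict maximum at $(r_o,z_o)$, and then argue that the level curve through $(r_o,z_1)$ sits strictly below that maximal value; the hypothesis $C(r_o,z_1)\subset\Omega^o$ then confines a regular level set to a compact interior region, which forces it to be a smooth closed curve.

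First I would pin down the shape of the two factors using Assumption~\ref{ass:singlemax}. Condition~(2) gives $g(0)=0$, condition~(3) together with $r>0$ gives $g>0$ on $(0,R)$, and condition~(4), after multiplying by $r_o$, gives $g'(r_o)=r_o\phi'(r_o)+\phi(r_o)=0$ with $g'(r)\neq0$ for $r\neq r_o$. A function that vanishes at $0$, is positive on the interior, and has exactly one critical point must increase on $(0,r_o)$ and decrease on $(r_o,R)$; the identical argument applied to $\psi$ via conditions (2),(3),(5) shows $\psi$ increases on $(0,z_o)$ and decreases on $(z_o,H)$. Since $\Gamma_r=g'(r)\psi(z)$ and $\Gamma_z=g(r)\psi'(z)$, and both $g$ and $\psi$ are strictly positive in the interior, $\nabla\Gamma=0$ forces $g'(r)=0$ and $\psi'(z)=0$, i.e.\ $(r,z)=(r_o,z_o)$. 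Thus $(r_o,z_o)$ is the unique interior critical point, and being a product of unimodal positive factors it is a strict maximum with value $\Gamma_{\max}=g(r_o)\psi(z_o)$.

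Next I would verify regularity along the curve and then invoke compactness. The level of $C(r_o,z_1)$ is $L=g(r_o)\psi(z_1)$; assuming $z_1\neq z_o$ (the case $z_1=z_o$ collapses $C$ to the single point $(r_o,z_o)$), strict unimodality of $\psi$ gives $\psi(z_1)<\psi(z_o)$ and hence $L<\Gamma_{\max}$. Consequently every point of $C(r_o,z_1)$ has $\Gamma=L<\Gamma_{\max}$, so none of them is the critical point $(r_o,z_o)$, and since by hypothesis they all lie in $\Omega^o$ we get $\nabla\Gamma\neq0$ all along $C$. Therefore $C$ consists entirely of regular points, so the connected component $\Sigma$ of $\{\Gamma=L\}$ containing $(r_o,z_1)$ is a one-dimensional submanifold without boundary. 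Because $\overline{\Omega}$ is compact, $\{\Gamma=L\}$ is closed in it, and the hypothesis $C\subset\Omega^o$ keeps $\Sigma$ away from $\partial\Omega$, so $\Sigma$ is compact. The characteristic vector field $(\eta,\zeta)=\tfrac1r(-\Gamma_z,\Gamma_r)$ is tangent to $\Sigma$ and nonvanishing on it (note $\Gamma=L>0$ on $\Sigma$ forces $g(r)>0$, hence $r>0$), so the orbit through $(r_o,z_1)$ sweeps out all of $\Sigma$; a compact, connected, boundaryless one-manifold is diffeomorphic to a circle, which is exactly the statement that $C(r_o,z_1)$ is a closed curve.

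I expect the compactness/classification step to be the main obstacle. The analytic content---unimodality of $g$ and $\psi$ and the uniqueness and maximality of the interior critical point---is a direct consequence of Assumption~\ref{ass:singlemax}. The delicate point is excluding the possibility that the regular level curve is a non-closed arc that accumulates on itself or escapes toward the boundary; this is precisely where the hypothesis $C(r_o,z_1)\subset\Omega^o$ is indispensable, since it traps the orbit in a compact subset of the interior on which the tangent field never vanishes, leaving a circle as the only topological possibility. I would also take care to record the degenerate case $z_1=z_o$ and to confirm that $r>0$ holds everywhere on $C$ so that the field $(\eta,\zeta)$ is genuinely nonsingular.
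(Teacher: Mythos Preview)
Your proof is correct but takes a different route from the paper's. The paper argues by direct sign-chasing: starting at $(r_o, z_1)$ with (say) $z_1 > z_o$, it follows $C(r_o,z_1)$ in the positive $t$ direction, using the signs of $\eta$ and $\zeta$ in each quadrant cut out by the lines $r = r_o$ and $z = z_o$ (together with the containment hypothesis) to force a crossing of $z = z_o$ and then of $r = r_o$ at some $(r_o,z_2)$ with $z_2<z_o$; the negative-$t$ traversal similarly lands at $(r_o,z_3)$; strict monotonicity of $z\mapsto\Gamma(r_o,z)$ on $(0,z_o)$ then forces $z_2=z_3$, and periodicity of the flow closes the curve. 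You instead isolate $(r_o,z_o)$ as the unique interior critical point of $\Gamma$, observe that the level $L=\Gamma(r_o,z_1)<\Gamma_{\max}$ is regular, and invoke the classification of compact connected one-manifolds without boundary. The paper's argument is elementary and self-contained, and its quadrant-by-quadrant bookkeeping is reused verbatim in the proof of Lemma~\ref{thm:characterizationofk}; yours is cleaner and more general (it applies to any $\Gamma$ with a single interior maximum, product form or not) at the cost of importing the classification theorem and, as you acknowledge, a little extra care in promoting $C\subset\Omega^o$ to compactness of the level component. One small caveat: your inference that $g$ \emph{decreases} on $(r_o,R)$ does not follow solely from $g(0)=0$, $g>0$, and a unique critical point (an inflection is not excluded); but the paper itself simply asserts that $(r_o,z_o)$ is the relative maximum of $\Gamma$, so you are matching its standing hypotheses.
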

\begin{proof}
Let $z_1>z_o$.  Since $\Gamma$ has only a single relative maximum, $\frac{\partial \Gamma}{\partial r} > 0$ for $r < r_o$ and $\frac{\partial \Gamma}{\partial r} < 0$ for $r > r_o$, and similarly for the vertical gradient of $\Gamma$.  Consider the characteristic curve which passes through $(r_o,z_1)$, and first traverse in the positive $t$ direction.  Since $\Gamma_z(r_o,z_1) < 0$, $\eta > 0$, and so the characteristic curve moves to the right.  For $r > r_o$ and $z>z_o$, $\eta >0$ and $\zeta < 0$, and so the characteristic curve moves to the right and down.  Since $C(r_0,z_1) \subset \Omega^o$, there must be an $r_1$ with $r_o <r_1<R$ such that $(r_1,z_o) \in C(r_o,z_1)$, else $C(r_o,z_1)$ would cross the line $r = R$.  Similarly, since for $r > r_o$ and $z < z_o$, $\eta <0$ and $\zeta < 0$, there must be a $0<z_2<z_0$ such that $(r_o,z_2)\in C(r_o,z_1)$.  Otherwise $C(r_o,z_1)$ would intersect the lower axis $z = 0$, which would contradict Corollary~\ref{thm:charaxes}.  Thus $C(r_o,z_1)$ intersects the line $r = r_o$ at $(r_o,z_2)$.  By traversing $C(r_o,z_1)$ in the negative $t$ direction starting from $(r_o,z_1)$, and using similar arguments, there is a $0<z_3<z_o$ such that $(r_o,z_3)\in C(r_o,z_1)$.

Suppose $z_2<z_3$.  Then there is a $z^*$ with $z_2<z^*<z_3$, and since $\Gamma_z > 0$, we must have that 
\begin{align}
\Gamma(r_o,z_2)<\Gamma(r_o,z^*)<\Gamma(r_o,z_3).
\end{align}
But this is a contradiction, since $\Gamma(r_o,z_2) = \Gamma(r_o,z_3)$.

Let $t_2$ such that $c(t_2,r_o,z_1) = (r_o,z_2)$ and $t_3$ such that $c(-t_3,r_o,z_1) = (r_o,z_3)$.  Then $c(t_2+t_3,r_o,z_1) = (r_o,z_1)$ and $C(r_o,z_1)$ is closed.

If $z_1 < z_o$, a symmetric argument shows that $C(r_o,z_1)$ is closed.
\end{proof}

\begin{lem}\label{thm:characterizationofk} Suppose that $z_o<h$.  Then one and only one of the following statements holds:
\begin{itemize}
\item[(1)]  $C(r_o,h)$ is a closed curve, and $K_h$ is the interior of the region enclosed by $C(r_o,h)$.
\item[(2)] $C(r_o,h)$ intersects the outer radial boundary at $(R,z_1)$ and $(R,z_2)$, and $K_h$ is the interior of the region enclosed by $C(r_o,h)$ and the segment $\{(R,z):z_1\leq z \leq z_2\}$
\end{itemize}
\end{lem}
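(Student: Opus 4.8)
The plan is to pass from characteristics to level sets of the circulation and reduce everything to the one–variable shape of $\Gamma=rv=r\phi(r)\psi(z)$. Writing $g(r)=r\phi(r)$, Assumption~\ref{ass:singlemax} makes $g$ strictly increasing on $(0,r_o)$ and strictly decreasing on $(r_o,R)$ with $g(0)=0$, and $\psi$ strictly increasing on $(0,z_o)$, strictly decreasing on $(z_o,H)$, with $\psi(0)=0$; hence $\Gamma$ is a product of two positive unimodal factors with a single interior critical point, namely a strict maximum at $(r_o,z_o)$. Set $c^{*}:=\Gamma(r_o,h)=g(r_o)\psi(h)$. Because $z_o<h$, restricting $\Gamma$ to the line $z=h$ gives $\Gamma(r,h)=\psi(h)g(r)$, which is maximized only at $r=r_o$; thus $\Gamma\le c^{*}$ along $z=h$, with equality exactly at $(r_o,h)$. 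By Lemma~\ref{thm:charcurves} the curve $C(r_o,h)$ is the connected component of $\{\Gamma=c^{*}\}$ through $(r_o,h)$, and I will show that $K_h$ is precisely the open region $U:=\{(r,z)\in\unobdom:\Gamma(r,z)>c^{*}\}$ bounded by it.

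Next I establish the stated dichotomy. Either $C(r_o,h)$ stays in the interior of $\dom$ or it meets the outer boundary $r=R$. In the first case Lemma~\ref{thm:singlemaxclosedcurves} (applied with $z_1=h$) shows $C(r_o,h)$ is a closed curve, giving alternative (1); in the second case I follow the level curve and use the unimodality of $\psi$ to show it meets $r=R$ in exactly two points $(R,z_1),(R,z_2)$, the two solutions of $\psi(z)=c^{*}/g(R)$, which exist and are distinct precisely when $c^{*}<g(R)\psi(z_o)$, so that together with the segment $\{R\}\times[z_1,z_2]$ it bounds $U$, giving alternative (2). These alternatives are mutually exclusive, since a closed curve cannot reach $r=R$, so exactly one holds.

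It remains to identify $U$ with $K_h$ and to check that $U$ is the enclosed region. Using unimodality, each vertical slice of $U$ is an interval about $z_o$, so $U$ is a connected, simply connected topological disk; since $\Gamma\le c^{*}$ on $z=h$ while $U$ contains $(r_o,z_o)$ with $z_o<h$, connectedness forces $U\subset\unobdom$, and its boundary is exactly $C(r_o,h)$, together with the boundary segment in case (2). One inclusion is immediate: $\Gamma$ is constant on characteristics, so any $(r_0,z_0)\in U$ has $\Gamma>c^{*}$ on all of $C(r_0,z_0)$, which therefore cannot meet $z=h$, placing $(r_0,z_0)\in K_h$.

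The reverse inclusion is the crux and the step I expect to be hardest. Given $(r_0,z_0)\in\unobdom$ with $c:=\Gamma(r_0,z_0)\le c^{*}$, I must show $C(r_0,z_0)$ reaches the line $z=h$. The quantitative engine is that the highest point of a level curve at value $c$ lies on $r=r_o$ at height $z_{\mathrm{top}}$ with $\psi(z_{\mathrm{top}})=c/g(r_o)\le\psi(h)$, whence $z_{\mathrm{top}}\ge h$ because $\psi$ is decreasing past $z_o$. I then trace the flow \eqref{eq:rchar}--\eqref{eq:zchar} exactly as in Lemmas~\ref{thm:zcgreaterthanh} and \ref{thm:singlemaxclosedcurves}: the signs of $\eta=-\phi\psi'$ and $\zeta=\psi g'/r$ make the orbit circulate about $(r_o,z_o)$, and following it through its leftmost excursion, where $r<r_o<R$ so it cannot escape through $r=R$, carries it up to $(r_o,z_{\mathrm{top}})$. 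Since $z_0<h\le z_{\mathrm{top}}$ and the orbit is connected, the intermediate value theorem along the curve yields a point with $z=h$, so $(r_0,z_0)\notin K_h$. The care needed here is to rule out the orbit leaving $\dom$ through $r=R$, $z=0$, or $r=0$ before it climbs over the top: the axes are excluded because $\Gamma=0<c$ there, and the left-hand ascent never reaches $r=R$, which is what makes the argument close.
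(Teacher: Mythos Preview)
Your argument is correct and shares the overall architecture of the paper's proof: establish the closed--versus--boundary dichotomy for $C(r_o,h)$, then show the enclosed region coincides with $K_h$ by a two-sided inclusion, using that $\Gamma$ is constant along characteristics. Where you diverge is in how the pieces are packaged. You identify the enclosed region \emph{analytically} as the superlevel set $U=\{\Gamma>c^{*}\}$ with $c^{*}=g(r_o)\psi(h)$, and for the harder inclusion you compute the apex $(r_o,z_{\mathrm{top}})$ of the level curve through $(r_0,z_0)$, show $\psi(z_{\mathrm{top}})=c/g(r_o)\le\psi(h)$ forces $z_{\mathrm{top}}\ge h$, and invoke the intermediate value theorem along the orbit. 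The paper instead defines the enclosed region $K_o$ topologically and argues dynamically that a trajectory starting in $\Omega_h\setminus K_o$ can neither enter $K_o$ (characteristics do not cross $\partial K_o=C(r_o,h)$) nor reach the axes, and must therefore exit through $z=h$; only for the inclusion $K_o\subset K_h$ does it invoke the inequality $\Gamma>c^{*}$ on $K_o$. Your level-set formulation makes the separable structure $\Gamma=g(r)\psi(z)$ do more explicit work and is a little more careful about the outer boundary $r=R$, while the paper's confinement argument is more geometric and would transfer more directly to non-separable $v$ with a single interior maximum of $\Gamma$.
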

\begin{proof}
First, if $C(r_o,h)$ is not a closed curve, then if $C(r_o,h)$ is traversed in the negative $t$ direction, it must cross the line $z = z_o$, and then the line $r = r_o$, because $C(r_o,h)$ cannot intersect the axes.  This implies that there is a $t$ such that $c(-t,r_o,h) = (r^*,z^*)$ with $r^* > r_o$ and $z^*<z_o$.  If $C(r_o,h)$ were to cross the line $z = z_o$ again, then the signs of the vorticities would force $C(r_o,h)$ to intersect $r = r_o$, and at the point $(r_o,h)$ by the argument in Proposition~\ref{thm:singlemaxclosedcurves}.  Similarly, if $C(r_o,h)$ is traversed in the positive $t$ direction, $C(r_o,h)$ cannot cross the line $z = z_o$, or else $C(r_o,h)$ would be a closed curve.  Thus, either $C(r_o,h)$ is a closed curve, or $C(r_o,h)$ intersects the outer radial boundary at two distinct points $(R,z_1)$ and $(R,z_2)$, where $z_1 < z_o < z_2$.  In either case, denote the set enclosed by $C(r_o,h)$ (and possibly $\{R\} \times [z_1,z_2]$) by $K_o$.

If $(r,z) \in \Omega_h \setminus K_o$, proceed as before by traversing $C(r,z)$ either in the positive ($r < r_o$ or $z < z_o$) or negative ($r > r_o$ and $z > z_o$) $t$ direction.  Note that $c(t,r,z) \notin K_o$ for all $t \in \R$ because $\partial K_o = C(r_o,h)$ (possibly plus the outer boundary), and characteristic curves may not intersect. Since $c(t,r,z)$ also cannot intersect the axes, there must be a $t$ such that $c(t,r,z) \in \obdomp$.  Thus $K \subset K_o$. 

If $(r_1,z_1) \in K_o$, then $\Gamma(r_1,z_1) > \Gamma(r,z)$ for all $(r,z) \in \obdom$.  Thus $C(r_1,z_1) \cap \obdomp = \emptyset$ and so $(r_1,z_1) \in K$.  Hence $K_o \subset K$, and $K = K_o$.
\end{proof}

\subsection*{Proof of Theorem~\ref{thm:nontrivialregionabovesurface}:}
\begin{proof}
If $z_o \geq h$, simply take $h_o = h$ by Lemma~\ref{thm:zcgreaterthanh}.  Suppose $z_o < h$.d Since $C(r_o,h) \subset \Omega$ is either closed or intersects the outer radial boundary, it is also compact.  Hence the map $(r,z) \mapsto z$ has a minimizer at some point $h_o$.  Thus, if $z < h_o$, $(r,z) \notin K_h$, and so $C(r,z) \cap \obdomp \neq \emptyset$ by Proposition~\ref{thm:characterizationofk}.
\end{proof}

\begin{remark}\emph{The height  $z = h_o$  can be referred to as the \emph{minimum unreachable height}, since for values of $z<h_o$, the solution is reachable via characteristic curves.  Under Assumption~\ref{ass:singlemax}, the proof of Theorem~\ref{thm:nontrivialregionabovesurface} implies that $h_o$ is the smallest solution of $\psi(z) = \psi(h)$.} \end{remark}

\begin{remark}\emph{A similar result holds for the map $(r,z) \mapsto r$, implying the existence of a ``minimum unreachable radius", though this is not directly relevant to the problem initially posed.}\end{remark}

\section{Model Limitations}
\label{sec:modellimits}

\subsection{Boundary Conditions}

\noindent The following corollary follows immediately from Assumption~\ref{ass:singlemax} and Lemma~\ref{thm:charcurves}
\begin{prop} \label{thm:charaxes} If Assumption~\ref{ass:singlemax} holds, then no characteristic curve may intersect the lines $r = 0$ and $z = 0$.
\end{prop}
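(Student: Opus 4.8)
The plan is to reduce everything to the level-set description of characteristic curves already established in Lemma~\ref{thm:charcurves}, namely that along any characteristic curve the circulation $\Gamma = rv = r\phi(r)\psi(z)$ is constant. The strategy is to show that $\Gamma$ is strictly positive on any characteristic curve passing through an interior point, while $\Gamma$ vanishes identically on both coordinate axes; these two facts are incompatible, so no such curve can reach either axis.

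First I would record the behavior of $\Gamma$ on the two lines. Since $\Gamma = r\phi(r)\psi(z)$, the explicit factor $r$ forces $\Gamma(0,z) = 0$ for every $z$, so $\Gamma$ vanishes on the line $r = 0$. On the line $z = 0$, the no-slip condition $\psi(0) = 0$ from Assumption~\ref{ass:singlemax}(2) gives $\Gamma(r,0) = r\phi(r)\psi(0) = 0$. Hence $\Gamma \equiv 0$ on both $\{r = 0\}$ and $\{z = 0\}$. Next I would note that at any interior point $(r_o,z_o)$ with $r_o \in (0,R)$ and $z_o \in (0,H)$ the positivity hypotheses in Assumption~\ref{ass:singlemax}(3) yield $\phi(r_o) > 0$ and $\psi(z_o) > 0$, so $\Gamma(r_o,z_o) = r_o\phi(r_o)\psi(z_o) > 0$.

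Finally I would combine the two observations. By Lemma~\ref{thm:charcurves}, the characteristic curve $C(r_o,z_o)$ through an interior point carries the constant value $\Gamma \equiv r_o\phi(r_o)\psi(z_o) > 0$. If this curve were to intersect either $r = 0$ or $z = 0$, then by constancy its value at the intersection point would still be that strictly positive constant, contradicting the fact that $\Gamma$ vanishes on both axes. Therefore no characteristic curve meets $r = 0$ or $z = 0$.

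There is no genuine obstacle in this argument; it is essentially immediate once Lemma~\ref{thm:charcurves} is in hand. The only point requiring care is verifying that $\Gamma$ vanishes on the \emph{entire} axes rather than at isolated points, and this is exactly what the explicit factor $r$ (for $r=0$) and the no-slip condition $\psi(0)=0$ (for $z=0$) provide; the strict positivity of $\Gamma$ in the open interior, which makes the constant nonzero, is supplied by the positivity assumptions on $\phi$ and $\psi$.
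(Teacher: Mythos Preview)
Your argument is correct and is essentially the same as the paper's: both observe that $\Gamma$ vanishes identically on the axes while $\Gamma > 0$ in the interior, and then invoke Lemma~\ref{thm:charcurves} to conclude that no level curve of $\Gamma$ through an interior point can meet either axis. The only cosmetic difference is that you spell out the positivity and vanishing computations in more detail; one small caution is that the symbols $(r_o,z_o)$ are reserved in the paper for the location of the maximum of $\Gamma$, so you should use a different name for your generic interior point.
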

\begin{proof} Note that $\Gamma(0,z) = \Gamma(r,0) = 0$.  Since $\Gamma > 0$ on the interior of $\Omega_h$, no level curve of $\Gamma$ intersections the boundaries, and hence no characteristic curves intersect the axes. \end{proof}  

\begin{remark}\emph{Proposition~\ref{thm:charaxes} implies that the choice of boundary conditions along the surface and the vertical axis do not affect the flow on the interior of the domain, so long as $v$ vanishes on these axes.  This is a consequence of the choice of dynamical constraints, and removes a physical degree of freedom from the problem, since in real vortices, surface roughness effects can propagate into the domain.The literature contains multiple discussions (e.g. \cite{fiedler86}, \cite{lewellen93}) of what boundary conditions are most realistic, and generate physically realistic vortices.  It is intuitively clear that the radial and vertical velocities will depend on the their behavior at the surface and along the central vertical axis, but this is not captured by the dynamics we are choosing to constrain the solution.}
\end{remark}

\subsection{Multiple MOH Intersections}
Another difficulty is the possibility of characteristic curves intersecting the MOH line multiple times.  In this case, the boundary data on the MOH line may not be compatible with the dynamics.  For real data, this will almost certainly not be the case due to noise and the error introduced by the tangential model $v$.  This situation is reminiscent of the data assimilation problem that is usually tackled using least squares minimization of an objective functional that penalizes disagreement between model prediction and observation relative to the uncertainty present in each.  More information about this topic is found in \cite{lakshmivarahan2006}.  This problem will be addressed in a future work.

\subsection{Velocities above the Minimum Unreachable Height}
The results in Section~\ref{section:surflayer} point to potential difficulties when $h_s > h_o$, the minimum unreachable height guaranteed by Theorem~\ref{thm:nontrivialregionabovesurface}.  Clearly, there are portions of this set that are reachable by characteristics, namely those characteristic curves that pass through to the surface layer below $h_o$.  The rest of $\Omega_h$ is precisely $K_h$, which we have called the information void.

\section{Numerical Experiments}
\label{sec:numexps}

In this section, a simple test of the theory developed in Section~\ref{section:surflayer} is demonstrated.  This experiment is an identical twin, since the same functional form is used to generate the observations as the one used to select $v$ and estimate $u$ and $w$.  Initial tests showed that dependence on the viscosity $\nu$ was small.  With this in mind, assume $\nu = 0$, which simplifies the numerics from solution of a linear ordinary differential equation for $\Phi$ to solving the equation $\Phi(r,z) = \Phi(r_o,h)$. This equation can be approximately solved to any specified degree of accuracy using a simple bisection method.

\subsection{Generation of Pseudo-observations}

As a first experiment, a collection of pseudo-observations is generated that emulates a single time of model output from Davies-Jones' axisymmetric model, described in \cite{dj2008}.  At the time of interest, the tangential velocity near the surface exhibits a single maximum.  The radial velocity is negative beneath this maximum, which is typical of a swirling flow (\cite{rotunno79}) with a no-slip condition on the tangential velocity, and represents air being drawn into the vortex.  Finally, the vertical velocity is relatively large and positive along the axis adjacent to the tangential maximum, which is also typical of these types of flows.  The tangential velocity $v(r,z)$ is modeled using a product of functions of the form
\begin{align}
	\phi_{ww}(x,n,x_c) = \frac{n{x_c}^{n-1}x}{(n-1){x_c}^n+x^n}.
\end{align}
The function $\phi$ has a smooth maximum at $(x_c,1)$, and increases approximately linearly on $(0,x_c)$, and decays like $x^{n-1}$ as $x\rightarrow\infty$.  Assume 
\begin{align*}
v(r,z) = v_c\phi_{ww}(r,n_r,r_c)\phi_{ww}(z,n_z,z_c).
\end{align*}
This function satisfies Assumption~\ref{ass:singlemax}, and so all of the theory in Section~\ref{sec:surflayer} is valid for this choice of model.  The velocity pseudo-obs used are depicted in Figure~\ref{fig:pseudobs}, with the tangential velocity depicted as contours, and the radial and vertical velocities depicted as a single vector in the $r-z$ plane.  To simulate the effects of measurement error, a set $\Sigma$ of independent realizations of a normal distribution with standard deviation equivalent to 1 $ms^{-1}$ was added to the tangential velocity values on a discrete spatial grid. Three separate experiments were run to simulate random errors with deviation as large as 3 $ms^{-1}$.

\begin{figure}[h]
\centering
\includegraphics[scale=0.8]{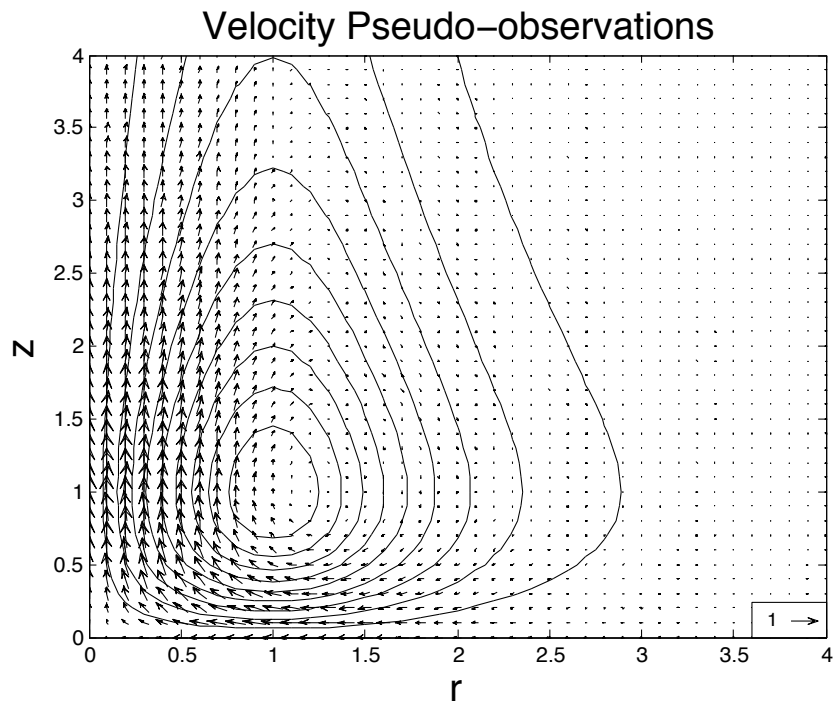}
\caption{The ``truth" velocity fields, where the contours are of tangential velocity with increment 0.1 from 0 to 1, and the vectors indicate the radial and vertical velocities, with a reference vector of length 1 shown in the bottom right corner.  Note the strength of the inflow near the surface, and of the updraft near the vertical axis.}
\label{fig:pseudobs}
\end{figure}

\subsection{Impact of MOH on Surface Layer Thickness}
As a demonstration of Theorem~\ref{thm:nontrivialregionabovesurface}, the streamfunction for a fixed initial condition was computed for MOH values of 1.5, 2.5 and 3.5.  The resulting surface layer streamfunction is plotted in Figure~\ref{fig:comp_hs_comparison}.  Note that as more of the vortex is observable, more is retrieved below the MOH line.  Also note that even in the case with the least information (h = 3.5), there is a retrieved surface layer of nontrivial thickness.

\begin{figure}[h]
\centering
\includegraphics[scale=0.6]{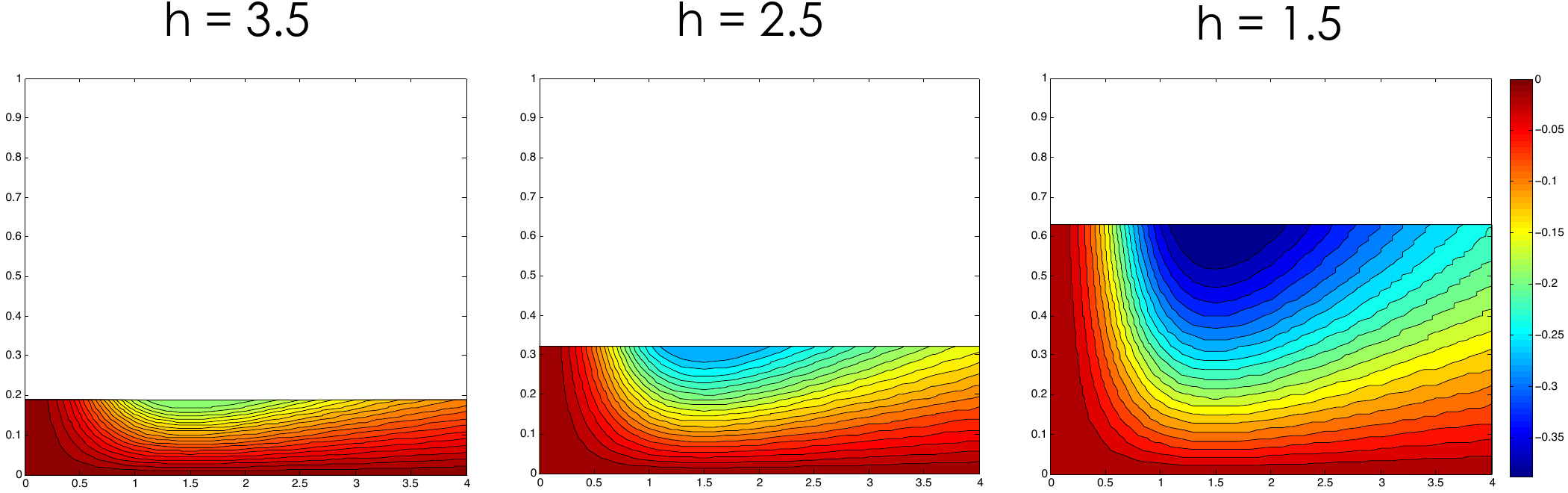}
\caption{Comparison of Retrieved Surface Layers for Three Different MOH Values}
\label{fig:comp_hs_comparison}
\end{figure}

\section{Discussion}

A methodology was introduced for extrapolating observations of wind velocities downward toward the surface.  For the dynamics chosen to constrain the flow, the information contained in observations aloft propagates along curves that coincide with the level curves of $\Gamma = rv$, which is estimated from observations in advance of the problem discussed here, and hence is known \emph{a priori}.  With more assumptions about our tangential model, the location and size of the information void $K_h$ are exactly known for a specific value of $h$.  An important result is that there is always a nontrivial height $h_o$, below which, everything can be retrieved using the characteristic framework.

As a first test of the method, a set of pseudo-observations was created that approximate the behavior of simulated tornadoes. The flow was then estimated assuming different amounts of knowledge, embodied in the parameter $h$, and different levels of random error.  These experiments yield mixed results, because it always occurs that the true value of $u^+$, $w^+$, or $|\vec{v}|_{\max}$ falls inside the spread of results, but not always with the correct frequency (relative to agreement with the tangential velocity observations).  It is clear from the theory that the functional form chosen for $v(r,z)$  has strong impacts on what can be retrieved, as well as the quality of what is retrieved.  

The authors assert that knowledge of the shortcomings of a particular method is valuable information; hence, the analysis of and focus on information voids.  In a more standard variational technique, these voids would not appear, because the various smoothness terms would ensure that a smooth solution is defined everywhere.  However, \emph{the solution in those regions is no more physically relevant than any other solution}, since it is completely determined by the terms that are introduced for numerical stability.  A natural next step is the inclusion of another dynamical constraint, such as the balance equation for azimuthal vorticity.  This extra constraint could yield a method to estimate a meaningful solution in the information voids, without relying on unphysical smoothing.

\bibliographystyle{model2-names}
\bibliography{tornado}

\end{document}